\tikzstyle{point}=[circle, inner sep = 2pt, outer sep = 1pt, minimum size = 5pt, fill=black, draw=black]
\newcommand{\set}[1]{\left\{#1\right\}}
\newtheorem{theo}{Theorem}
\newtheorem{coro}{Corollary}
\newtheorem{lema}{Lemma}
\newtheorem{rema}{Remark}
\begin{document}

\title{Groups as automorphisms of dessins d'enfants}
\author[A. Ca\~{n}as]{Alejandro Ca\~{n}as}
\author[R.A. Hidalgo]{Rub\'en A. Hidalgo}
\author[F.J. Turiel]{Francisco Javier Turiel}
\author[A. Viruel]{Antonio Viruel}

\keywords{Automorphisms, Dessins d'enfants}
\subjclass[2010]{37F10, 14H37, 14H57, 20H10, 57N05, 57N16}

\address{Departamento de \'Algebra, Geometr\'{\i}a y Topolog\'{\i}a, Universidad de M\'alaga, Espa\~na}
\email{alejandro.cm.95@uma.es, turiel@uma.es, viruel@uma.es}

\address{Departamento de Matem\'atica y Estad\'{\i}stica, Universidad de La Frontera. Temuco, Chile}
\email{ruben.hidalgo@ufrontera.cl}
\thanks{The first and third authors are partially supported by Ministerio de Ciencia e Innovaci\'on (Spain) grant PID2020-118452GB-I00. The second author is partially supported by Project Fondecyt 1190001. The first and fourth authors are partially supported by Ministerio de Ciencia e Innovaci\'on (Spain) grant PID2020-118753GB-I00. }

\begin{abstract}
It is known that every finite group can be represented as the full group of automorphisms of a suitable compact dessin d'enfant. In this paper, we give a constructive and easy proof that the same holds for any countable group by considering non-compact dessins. Moreover, we show that any tame action of a countable group is so realisable.
\end{abstract}

\maketitle

\section{Introduction}
In this paper, all surfaces are assumed to be orientable, connected, Hausdorff, second countable and without boundary. If $X$ is a surface, then we denote by ${\rm Hom}^{+}(X)$ the group of its orientation-preserving self-homeomorphisms. A topological action (or just an action) of an abstract group $G$ on a surface $X$ is an injective homomorphism $\theta:G \to {\rm Hom}^{+}(X)$ such that $\theta(G)$ acts tame on $X$ (see Section \ref{Sec:prelim}).

Every surface admits a Riemann surface structure and, if $S$ is a Riemann surface different from the Riemann sphere, the complex plane, the punctured plane, a torus, an annulus, the hyperbolic plane or the punctured hyperbolic plane, then  its group ${\rm Aut}(S)$ of conformal automorphisms is a countable group. Conversely, in  \cite{Greenberg}, Greenberg proved  that for every countable (finite or infinite) group $G$ there is a Riemann surface $S$ (which can be assumed to be compact if $G$ is finite) and an injective homomorphism $\theta:G \to {\rm Aut}(S)$. Moreover, $\theta$ may be assumed to be an isomorphism (by using maximal Fuchsian groups). A short proof of the previous fact has been given by Allcock in \cite{Allcock} (see also \cite{Wilkelman}). In the case that $S$ is hyperbolic, its universal cover is the hyperbolic plane ${\mathbb H}^{2}$. The hyperbolic metric of ${\mathbb H}^{2}$ induces a complete Riemannian metric of constant negative curvature on $S$ and it holds that its group ${\rm Isom}^{+}(S)$ of orientation-preserving isometries coincides with ${\rm Aut}(S)$.

A dessin d'enfant (or just a dessin) is a pair ${\mathcal D}=(X,{\mathcal G})$, where $X$ is a surface (either compact or not), ${\mathcal G} \subset X$ is a bipartite graph (vertices are either black or white and adjacent ones have different colors) where each vertex has finite degree and each connected component of $X \smallsetminus {\mathcal G}$, called a face, is homeomorphic to an open disc and bounded by a finite set of edges (an edge might be internal to a face).  Each dessin d'enfant on $X$ induces a unique (up to biholomorphisms) Riemann surface structure on it \cite{GiGo,JW}.
Dessins, on a compact surface $X$, were first introduced by Grothendieck in his {\it Esquisse d'un Programme} (1984) \cite{Gro} (we call these dessins as Grothendieck's dessins).
In this case, such a Riemann surface structure can be described by an algebraic curve defined over the field of algebraic numbers $\overline{\mathbb Q}$ and Grothendieck's idea was to use such a combinatorial tool to obtain information on the structure of the absolute Galois group ${\rm Gal}(\overline{\mathbb Q}/{\mathbb Q})$. Generalities on Grothendieck's dessins can be found, for instance, in \cite{GiGo,GiGo2,GZ,Gro,JW}.

The mapping class group associated to $X$ is defined to be ${\rm Map}(X)={\rm Hom}^{+}(X)/{\rm Hom}_{0}(X)$, where ${\rm Hom}_{0}(X)$ is the subgroup of homeomorphisms which are isotopic to the identity. Let  $\theta:{\rm Hom}^{+}(X) \to {\rm Map}(X)$ denote the quotient group projection. The automorphisms of the bipartite graph ${\mathcal G}$, denoted by  ${\rm Aut}_{bip}({\mathcal G})$, is the group  consisting of automorphisms of the graph that keep invariant black (respectively, white) vertices (called automorphisms of the bipartite graph).

Let $\widetilde{\rm Aut}({\mathcal D})$ be the subgroup of ${\rm Hom}^{+}(X)$ consisting of those
$\phi \in {\rm Hom}^{+}(X)$ keeping invariant ${\mathcal G}$ and inducing an element of ${\rm Aut}_{bip}({\mathcal G})$. Note that,
by performing any isotopy of $\phi \in \widetilde{\rm Aut}({\mathcal D})$ relative the vertices and keeping invariant the edges, we obtain the same automorphism of the bipartite graph; so this group is always uncountable.

The group ${\rm Aut}({\mathcal D}):=\theta(\widetilde{\rm Aut}({\mathcal D}))$ is called the group of automorphisms of the dessin ${\mathcal D}$ (this permits us to talk about the topological action of this group on $X$). There is a natural injective homomorphism $\rho:{\rm Aut}({\mathcal D}) \to {\rm Aut}_{bip}({\mathcal G})$, so this provides a natural copy of ${\rm Aut}({\mathcal D})$ inside ${\rm Aut}_{bip}({\mathcal G})$ (i.e., we can see ${\rm Aut}({\mathcal D})$ as those bipartite graph automorphisms of ${\mathcal G}$ that can be extended to an element of ${\rm Hom}^{+}(X)$). As previously said, the dessin ${\mathcal D}$ induces a (unique up to biholomorphisms) Riemann surface structure $S$ on $X$.
In this case, there is a subgroup of ${\rm Aut}(S)$ which is isomorphic (by $\theta$) to ${\rm Aut}({\mathcal D})$, that is, we may see
${\rm Aut}({\mathcal D}) \leq {\rm Aut}(S)={\rm Isom}^{+}(S)$. Now, as such a subgroup keeps invariant a non-empty discrete set of points (the black vertices),  it is always a countable group.

If ${\rm Aut}({\mathcal D})$ acts transitively on the set of edges of the graph, then the dessin is called regular (in this case, ${\rm Aut}({\mathcal D})$ is necessarily generated by two elements)  \cite{GiGo,JW}. If $X$ is compact, then ${\rm Aut}({\mathcal D})$ is necessarily finite. If $X$ is non-compact, then ${\rm Aut}({\mathcal D})$ might be infinite (but countable).

In \cite{H,GJ} it was observed that  every finite group
$G$ is isomorphic to the group of automorphisms of some Grothendieck's dessin (moreover, if $G$ is generated by two elements, then the dessin can be chosen to be regular).
In \cite{H}, it was also proved the existence for $G$ infinite, but assuming it was finitely generated.

Our first result is the following, which extends the above for any countable group. Moreover, we build a surface in which we realize any given group together with its subgroup lattice.

\begin{theo}\label{realizacion}
Let $G$ be a countable (finite or infinite) group. Then, there is a hyperbolic Riemann surface $X$ and a family of dessins $\set{(X,\mathcal{D}_H)}_{H\leq G}$ such that,
\begin{enumerate}
	\item [i)] ${\rm Isom}^{+}(X)\cong G$,
	\item [ii)] ${\rm Aut}(X,\mathcal{D}_H)=\set{h\in{\rm Isom}^+(X)\ |\ h(\mathcal{D}_H)=\mathcal{D}_H}\cong H$,
	\item [iii)] $H_1\leq H_2\ \Longrightarrow\ \mathcal{D}_{H_1}\leq\mathcal{D}_{H_2}$.
\end{enumerate}
Furthermore, if $G$ is finite, then $X$ can be chosen compact so obtaining a family of Grothendieck dessins.
\end{theo}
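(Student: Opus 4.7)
I begin by invoking Greenberg's realization theorem in the sharpened form of Allcock (both cited in the introduction) to obtain a hyperbolic Riemann surface $X$, compact if $G$ is finite, together with an isomorphism identifying $G$ with $\mathrm{Isom}^{+}(X)$. The strategy is to fix once and for all a $G$-invariant ``backbone'' dessin $\mathcal{D}_{0}$ on $X$ and to encode each subgroup $H$ by attaching to $\mathcal{D}_{0}$ a single $H$-orbit of an otherwise rigid ``marker''.

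To build $\mathcal{D}_{0}$, I would take a $G$-equivariant smooth triangulation of $X$ (available because $G$ acts properly discontinuously by isometries), declare its $0$-cells black, and bisect every edge by a new white vertex. The resulting $1$-skeleton is locally finite, $G$-invariant as a bipartite graph, and its complementary regions are the open triangles of the triangulation, hence discs; so it is a dessin. Since every non-identity element of $\mathrm{Isom}^{+}(X)$ has at most isolated fixed points, the union of the fixed-point sets of $G\setminus\{e\}$ is countable; refining the triangulation if necessary, I can pick a black vertex $v_{0}$ of $\mathcal{D}_{0}$ with trivial $G$-stabilizer and a face $F_{0}$ incident to it. Proper discontinuity then yields a neighbourhood $U$ of $v_{0}$ whose $G$-translates $\{gU\}_{g\in G}$ are pairwise disjoint.

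Inside $U\cap\overline{F_{0}}$, attach to $v_{0}$ a single edge $e_{0}$ ending at a new white vertex $w_{0}\in F_{0}$, and call this configuration the marker $\mathcal{T}_{0}$; clearly $\mathrm{Stab}_{G}(\mathcal{T}_{0})=\mathrm{Stab}_{G}(v_{0})=\{e\}$, and the translates $\{g\mathcal{T}_{0}\}_{g\in G}$ lie in disjoint cells $\{gU\}$. For each $H\leq G$ I set
\[
\mathcal{D}_{H}\;:=\;\mathcal{D}_{0}\,\cup\,\bigcup_{h\in H}h\cdot\mathcal{T}_{0}.
\]
Bipartiteness is preserved (each edge $h\cdot e_{0}$ joins the black vertex $hv_{0}\in\mathcal{D}_{0}$ to the new white vertex $hw_{0}$), each vertex has finite degree (at most one translate of $\mathcal{T}_{0}$ attaches at any given vertex, by triviality of $\mathrm{Stab}_{G}(v_{0})$), and every face of $\mathcal{D}_{0}$ is only slit along pairwise disjoint sprouts anchored at distinct boundary vertices, so it remains a disc; thus $\mathcal{D}_{H}$ is a genuine dessin.

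Item (iii) is immediate from the definition. For (ii), each $h\in H$ preserves $\mathcal{D}_{H}$ by construction; conversely, if $g\in G$ preserves $\mathcal{D}_{H}$, then since $g$ already preserves $\mathcal{D}_{0}$ it must permute the disjoint sprouts $\{h\mathcal{T}_{0}\}_{h\in H}$, so $g\cdot\mathcal{T}_{0}=h'\mathcal{T}_{0}$ for some $h'\in H$, and triviality of $\mathrm{Stab}_{G}(\mathcal{T}_{0})$ then forces $g=h'\in H$. Under the identification $G=\mathrm{Isom}^{+}(X)$ this reads $\mathrm{Aut}(X,\mathcal{D}_{H})=H$. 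When $G$ is finite the surface $X$ is compact, the triangulation and the $H$-orbits are finite, and we land on Grothendieck dessins. The main obstacle I anticipate is purely geometric: guaranteeing that the marker $\mathcal{T}_{0}$ and its $G$-translates sit inside translation-disjoint neighbourhoods and do not spoil the disc-face condition when multiple sprouts enter the same face---precisely what the proper-discontinuity argument above secures.
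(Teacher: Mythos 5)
There is a genuine gap, and it sits exactly at the heart of item (ii). Recall how the paper defines ${\rm Aut}(X,\mathcal{D})$: it is the group of \emph{all} orientation-preserving self-homeomorphisms of $X$ that preserve the bipartite graph (taken up to isotopy), not just the isometries of the chosen hyperbolic metric that do so. Item (ii) therefore contains two assertions: first, that every dessin automorphism of $(X,\mathcal{D}_H)$ is induced by an element of ${\rm Isom}^+(X)$; second, that the isometries preserving $\mathcal{D}_H$ form a copy of $H$. Your argument only proves the second assertion. Nothing in your construction addresses the first: a $G$-equivariant triangulation chosen with no further care can perfectly well have combinatorial automorphisms (extendable to homeomorphisms of $X$) that are not realized by isometries of the given metric --- for instance, if the induced triangulation of the quotient happens to be invariant under some finite-order mapping class, this lifts to an automorphism of $\mathcal{D}_0$ (and possibly of $\mathcal{D}_H$, since it need not move your sprouts) lying outside ${\rm Isom}^+(X)$. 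Equivalently, the dessin induces its own canonical conformal structure on $X$, which in general differs from the hyperbolic structure you fixed, so ${\rm Aut}(X,\mathcal{D}_H)$ can strictly contain $\set{h\in{\rm Isom}^+(X)\ |\ h(\mathcal{D}_H)=\mathcal{D}_H}$. To close this you would have to build combinatorial rigidity into the dessin itself and prove that every bipartite-graph automorphism preserves enough structure to be forced into the isometry group.

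This is precisely what the paper's construction is engineered to deliver, and why it does not simply quote Greenberg--Allcock for item (i): the surface $X$ is assembled from pairs of pants indexed by a rigid $3$-labelled cubic graph encoding $G$ (Theorem \ref{3graph}), with boundary geodesics shorter than $2$ and all other closed geodesics longer than $3$, so isometries must permute pants; and the decorating graph on each pair of pants uses chains of pairwise distinct lengths, longer than the boundary cycles, so that any automorphism of the dessin must preserve the cycles coming from the pants boundaries and is therefore induced by an isometry, after which the free action of Lemma \ref{action} pins it down to an element of $H$. Your sprout idea for encoding subgroups and the resulting proof of (iii) and of the isometric part of (ii) are fine (and close in spirit to Remark \ref{observa}), and your appeal to Greenberg/Allcock legitimately gives (i); but as written the proposal does not prove ${\rm Aut}(X,\mathcal{D}_H)=\set{h\in{\rm Isom}^+(X)\ |\ h(\mathcal{D}_H)=\mathcal{D}_H}$, which is the essential content of item (ii). (A secondary, smaller point: the existence of the $G$-equivariant triangulation and of a vertex with trivial stabilizer rests on proper discontinuity of the isometric action, which is true here but should be justified rather than assumed.)
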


In \cite{H}, it was observed that, given a topological action of a finite group $G$ on a compact surface $X$, there is a Grothendieck's dessin ${\mathcal D}=(X,{\mathcal G})$ such that $G$  induces ${\rm Aut}({\mathcal D})$.
This result states that not only there is a dessin with a given group of automorphisms, but also that the given topological action is preserved by the automorphism group. In particular, the strong symmetric genus of a finite group is also the minimal genus action at the level of Grothendieck's dessins.
The proof of such result was done in terms of Riemann surfaces, Fuchsian groups and quasiconformal deformation theory. In this paper, we provide a simple  and short argument to the above realisation  that also works for any countable group.

\begin{theo}\label{main}
Let $X$ be a surface and fix a topologically tame action $\theta:G \to {\rm Hom}^{+}(X)$, where $G$ is a
countable (finite or infinite) group. Then there is a dessin ${\mathcal D}=(X,{\mathcal G})$ with ${\rm Aut}({\mathcal D})$ isomorphic to $G$ and induced by the topological action of $\theta(G)$.
\end{theo}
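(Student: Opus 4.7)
The plan is to pass to the quotient orbifold $Y := X/\theta(G)$, which is a well-defined tame $2$-orbifold since the action $\theta$ is tame, and reduce the theorem to the construction of a sufficiently rigid dessin on $Y$ whose pullback to $X$ does the job. Let $p : X \to Y$ be the quotient map; the cone points of $Y$ are exactly the images of points in $X$ with non-trivial (finite) stabilizer, each marked with the order of the corresponding stabilizer.

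First I would fix a locally finite cell decomposition of $Y$ whose vertex set contains every cone point and at least one representative of every end, and then pass to a barycentric-type refinement to obtain a bipartite graph $\mathcal{G}_Y \subset Y$ with finite vertex degrees such that $Y \smallsetminus \mathcal{G}_Y$ is a disjoint union of open disks bounded by finitely many edges. Around each cone point of order $n$ I would arrange the local combinatorics to be invariant under a local rotation of order $n$ but rigid otherwise, so that the branching data of $p$ is faithfully recorded by the combinatorial type of $\mathcal{G}_Y$ at these vertices.

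Next, to enforce global rigidity, I would attach inside each face of $\mathcal{G}_Y$ an asymmetric decoration --- for instance, a small bipartite tree whose isomorphism type is determined by a unique natural number, using an enumeration of the (at most countably many) faces of $\mathcal{G}_Y$ provided by second countability of $Y$. This can be arranged without disturbing either the finite-degree condition or the disk-complement condition. The lift $\mathcal{G} := p^{-1}(\mathcal{G}_Y) \subset X$ is then a bipartite graph whose complement is a disjoint union of open disks, so $\mathcal{D} := (X,\mathcal{G})$ is a dessin, and by construction every element of $\theta(G)$ preserves $\mathcal{G}$ and acts as a bipartite graph automorphism, giving a natural embedding $G \hookrightarrow \mathrm{Aut}(\mathcal{D})$.

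For the opposite inclusion, any $\phi \in \widetilde{\mathrm{Aut}}(\mathcal{D})$ must preserve the cone-order data recorded at the vertices of $\mathcal{G}$ and therefore permute the fibers of $p$; after a suitable isotopy relative to the vertices, $\phi$ descends to an orientation-preserving orbifold homeomorphism $\overline{\phi}$ of $Y$ preserving $\mathcal{G}_Y$ together with its decorations. The asymmetry built into the gadgets forces $\overline{\phi}$ to be isotopic to the identity, so $\phi$ is isotopic to an element of $\theta(G)$, yielding $\mathrm{Aut}(\mathcal{D}) \cong G$ realised by $\theta$. The main obstacle is the construction in the middle step: producing a single bipartite graph on $Y$ that simultaneously is locally finite, has disk complement, faithfully encodes every cone order, and admits only the trivial orbifold automorphism, while remaining manageable when $Y$ is non-compact and of infinite topological type.
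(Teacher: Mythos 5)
Your overall strategy---pull back a suitably rigid dessin from the quotient $Y=X/\theta(G)$ of the tame action---is the same as the paper's, and your preliminary arrangements (putting the branch values of $p$ at vertices so that the lifted complementary regions are disks, and installing countably many pairwise non-isomorphic gadgets as an asymmetry device) run parallel to the paper's conditions that $B_{\pi}$ be contained in the white vertices and that the quotient dessin have exactly one black vertex of degree one. The difference, and the problem, lies in how the reverse inclusion $\mathrm{Aut}(\mathcal D)\leq \theta(G)$ is obtained.

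The step ``any $\phi\in\widetilde{\mathrm{Aut}}(\mathcal D)$ must preserve the cone-order data recorded at the vertices of $\mathcal G$ and therefore permute the fibers of $p$'' is a non sequitur, and it is precisely the heart of the theorem. Preserving local cone orders does not imply that $\phi$ sends $\theta(G)$-orbits to $\theta(G)$-orbits: two points with identical local data need not lie in the same fiber, and for $a$ in the a priori larger group $A=\mathrm{Aut}(\mathcal D)$ one has $a(\theta(G)x)=(a\theta(G)a^{-1})\,a(x)$, which is a $\theta(G)$-orbit only when $a$ normalizes $\theta(G)$---exactly what you are trying to prove. The paper avoids any such descent to $Y$: it identifies $\mathrm{Aut}(\mathcal D)$ with the deck group of the induced locally finite branched cover $Q\circ\pi:X\to S^{2}$, so that the quotient $R=X/\mathrm{Aut}(\mathcal D)$ exists and sits \emph{below} $Y$ (one gets $X\to Y\to R\to S^{2}$ rather than a self-map of $Y$), and then the unique non-critical point of $Q$ over $p_{0}$ (the single degree-one black vertex) yields a contradiction on local degrees of $P:Y\to R$ if $P$ has degree $\geq 2$. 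Your asymmetric face decorations could in principle replace that mechanism---one would argue combinatorially that, since distinct faces of $\mathcal G_Y$ carry pairwise non-isomorphic decorations, any automorphism of the lifted graph sends every face, edge and vertex to one lying over the \emph{same} face, edge and vertex of $\mathcal G_Y$, hence satisfies $p\circ\phi=p$ up to isotopy and is a deck transformation of $p$, i.e.\ lies in $\theta(G)$---but no such argument is given; the decorations in your write-up are only used \emph{after} the descent you have not established. By contrast, the construction you single out as the main obstacle (a locally finite bipartite graph on a possibly infinite-type $Y$ with disk complement and prescribed asymmetries) is comparatively routine; the missing idea is the passage from an arbitrary dessin automorphism of $X$ to data on the quotient.
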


Let $G$ be any countable group and let $X$ be any surface of infinity type such that all of its ends are non-planar and its ends space is self-similar (see \cite{APV} for details). An example of such type of surfaces is, for instance, the Loch Ness monster  (the unique, up to orientation-preserving homeomorphisms, infinite genus surface with exactly one end).
In \cite{APV}, Aougab-Patel-Vlamis  proved that one can find a Riemann surface structure $S$ on $X$ such that ${\rm Aut}(S)$ is isomorphic to $G$. So, the above theorem asserts the following existence fact (as already stated before).

\begin{coro}
 Let $X$ be a surface of infinite type, where all of its ends are non-planar and its ends space is self-similar. If $G$ is a countable group, then there is a dessin ${\mathcal D}=(X,{\mathcal G})$ such that ${\rm Aut}({\mathcal D})$ isomorphic to $G$. In fact, such a dessin can be given in the Loch Ness monster.
\end{coro}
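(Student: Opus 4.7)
The corollary is essentially a direct combination of the Aougab--Patel--Vlamis realisation theorem with Theorem \ref{main}. The plan is to first produce a topologically tame action of $G$ on $X$ and then feed it into Theorem \ref{main} to obtain the desired dessin.

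First I would invoke the result of Aougab--Patel--Vlamis cited in the paragraph preceding the corollary: under the stated hypotheses (infinite type, all ends non-planar, self-similar ends space) there exists a Riemann surface structure $S$ on $X$ such that ${\rm Aut}(S)\cong G$. Composing an isomorphism $G\to {\rm Aut}(S)$ with the natural inclusion ${\rm Aut}(S)\hookrightarrow {\rm Hom}^{+}(X)$ produces a faithful homomorphism $\theta:G\to {\rm Hom}^{+}(X)$.

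The step that needs justification is that this $\theta$ is topologically tame in the sense of Section \ref{Sec:prelim}. Since $X$ has infinite type and non-planar ends, the induced Riemann surface $S$ is hyperbolic, so ${\rm Aut}(S)={\rm Isom}^{+}(S)$ acts by isometries of a complete hyperbolic metric. Any group of isometries of such a surface acts properly discontinuously on $S$ (fixed points of non-trivial elements are isolated, and the orbit of any point is discrete). This properly discontinuous action with isolated fixed points is exactly the tameness condition used throughout the paper, so $\theta$ satisfies the hypothesis of Theorem \ref{main}.

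With tameness in hand, Theorem \ref{main} applied to $\theta$ yields a dessin ${\mathcal D}=(X,{\mathcal G})$ with ${\rm Aut}({\mathcal D})\cong G$ and induced by $\theta(G)$, giving the first assertion. For the final sentence, I would observe that the Loch Ness monster (infinite genus, one end) has a single non-planar end and a one-point ends space, which is trivially self-similar, so it satisfies the hypotheses and the same construction applies to it. The only real subtlety is the tameness verification; everything else is bookkeeping.
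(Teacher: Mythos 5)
Your proposal is correct and follows essentially the same route as the paper, which simply combines the Aougab--Patel--Vlamis realisation with Theorem \ref{main}; your only addition is to spell out why the action is tame, which the paper leaves implicit. One small caution: tameness in this paper is \emph{defined} by the existence of a Galois locally finite branched cover with deck group $\theta(G)$, so you should phrase the verification as ``the action of ${\rm Aut}(S)$ on the hyperbolic surface $S$ (non-exceptional, since $X$ has infinite genus) is properly discontinuous with finite cyclic stabilizers, hence the quotient map $X \to X/\theta(G)$ is such a cover,'' rather than asserting that proper discontinuity \emph{is} the tameness condition.
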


\section{Preliminaries}\label{Sec:prelim}

\subsection{Locally finite branched coverings}
A surjective continuous map $Q:X \to Y$, where $X$ and $Y$ are surfaces, is called a locally finite branched cover if:
\begin{enumerate}
\item the locus of branched values $B_{Q} \subset Y$ of $Q$ (it might be empty) is a discrete set;
\item $Q:X \smallsetminus Q^{-1}(B_{Q}) \to Y \smallsetminus B_{Q}$ is a covering map; and
\item each point $q \in B_{Q}$ has an open connected neighbourhood $U$ such that $Q^{-1}(U)$ consists of a collection $\{V_{j}\}_{j\in I}$ of pairwise disjoint connected open sets such that each of the restrictions $Q|_{V_j}:V_{j} \to U$  is a finite degree branched cover (i.e., it is topologically equivalent to a branched cover of the form $z \in {\mathbb D} \mapsto z^{d_{j}} \in {\mathbb D}$, where ${\mathbb D}$ denotes the unit disc).
\end{enumerate}

If $X$ is a compact surface, then a branched covering is always locally finite. But, in the case that $X$ is non-compact, a branched covering might not be locally finite.

\subsection{Topologically tame actions}
Let $G$ be a group, $X$ be a surface and $\theta:G \to {\rm Hom}^{+}(X)$ be an injective homomorphism. We say that the action of $\theta(G)$ is tame on $X$ if there is a Galois locally finite branched cover $Q:X \to Y$ with deck group $\theta(G)$ (in particular, $G$ is countable).

\subsection{Belyi pairs and dessins}
Let $S^{2}$ be the standard $2$-dimensional sphere and fix three different points $p_{0}, p_{1}, p_{\infty} \in S^{2}$. A dessin ${\mathcal D}=(X,{\mathcal G})$ defines a (unique up to isotopy) locally finite branched cover $Q:X \to S^{2}$ with $B_{Q} \subseteq \{p_{0},p_{1},p_{\infty}\}$, where $Q^{-1}(p_{0})$ (respectively, $Q^{-1}(p_{1})$) are the black (respectively, white) vertices of ${\mathcal G}$ and there is an arc $\delta=[p_{0},p_{1}] \subset S^{2} \smallsetminus \{p_{\infty}\}$, such that that $Q^{-1}(\delta)={\mathcal G}$. Moreover,
${\rm Aut}({\mathcal D})={\rm Aut}(X,Q)=\{ \phi \in {\rm Hom}^{+}(X): Q \circ \phi=Q\}$. Note that, for $X$ either of genus $g \geq 1$ or of infinite type, then $\#B_{Q}=3$. We say that the pair $(X,Q)$, as above, is a Belyi pair.
Conversely, given a  Belyi pair $(X,Q)$,
where $Q:X \to S^{2}$  is a locally finite  branched cover with $B_{Q} \subseteq \{p_{0},p_{1},p_{\infty}\}$, then it induces a dessin ${\mathcal D}$ as above.

\begin{rema}
Let $(X,Q)$ be a Belyi pair.
If $\varphi:S^{2} \to \widehat{\mathbb C}$ is an orientation-preserving homeomorphism, where $\widehat{\mathbb C}$ denotes the Riemann sphere, with $\varphi(B_{Q})\subseteq \{\infty,0,1\}$, then we may pull-back the Riemann surface structure of $\widehat{\mathbb C}$, under $\varphi \circ Q$, in order to provide a Riemann surface structure $S$ on $X$ such that $\varphi \circ Q:S \to \widehat{\mathbb C}$ is a non-constant meromorphic map with branch value set contained inside $\{\infty,0,1\}$. In this case, ${\rm Aut}({\mathcal D})$ is a subgroup of ${\rm Aut}(S)$ (the deck group of $\varphi \circ Q$). If $X$ is compact, then Belyi's theorem \cite{Belyi} asserts that the Riemann surface $S$ can be represented by an algebraic curve over $\overline{\mathbb Q}$. Giving an equivalent property in the case $X$ is non-compact is still an open problem.
\end{rema}

\subsection{Realisation of group actions on cubic simple graphs}
We refer to \cite{bollo} for the basic facts in Graph Theory.

\begin{theo}\label{3graph}
Let $G$ be a non-trivial countable group. There exists a connected cubic simple graph $\mathcal{G}$ and a proper $3$-labelling of its edges such that ${\rm Aut}_{LGraph}(\mathcal{G})\cong G$. Furthermore, there exist infinitely many such graphs, and if $G$ is finite, these graphs can be chosen to be finite as well.
\end{theo}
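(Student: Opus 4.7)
The plan is to realise $G$ first on a labelled auxiliary structure and then to refine that structure to a cubic simple graph carrying a proper $3$-labelling. A natural starting point is the labelled Cayley graph $\Gamma = \mathrm{Cay}(G,S)$ for a generating set $S\subseteq G$: the left regular action embeds $G \hookrightarrow \mathrm{Aut}_{LGraph}(\Gamma)$, and any label-preserving automorphism that fixes the identity must fix every generator and is therefore trivial. Consequently $\mathrm{Aut}_{LGraph}(\Gamma)\cong G$, realised as the left regular action, and $\Gamma$ is finite whenever $G$ is. However $\Gamma$ is in general neither cubic, nor simple, nor $3$-labelled, so a further reduction is required.

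The second step converts $\Gamma$ into a cubic simple graph $\mathcal{G}$ with a proper $3$-labelling. The idea is to replace each vertex of $\Gamma$ by a \emph{port gadget}: a finite cubic graph with a prescribed number of distinguished boundary half-edges (``ports''), each port pre-coloured by one of three colours and equipped with a canonical proper $3$-labelling of its internal edges. The gadget is chosen to be \emph{rigid}, meaning that its only label-preserving automorphism that fixes the ports pointwise is the identity. Gluing gadgets together along their ports according to the edges and labels of $\Gamma$ produces a connected cubic simple graph $\mathcal{G}$ with a proper $3$-labelling. Rigidity of the gadget forces every element of $\mathrm{Aut}_{LGraph}(\mathcal{G})$ to permute the gadgets according to an element of $\mathrm{Aut}_{LGraph}(\Gamma)$, and combining with the first step yields $\mathrm{Aut}_{LGraph}(\mathcal{G})\cong G$.

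The two auxiliary clauses follow by standard variations: when $G$ is finite, both $\Gamma$ and each gadget are finite, so $\mathcal{G}$ is finite; and to produce infinitely many pairwise non-isomorphic examples one varies the port gadget within an infinite family of rigid cubic gadgets (for instance by enlarging an asymmetric ``core'') while keeping the number of ports and the boundary colouring fixed. Connectedness of $\mathcal{G}$ is immediate from connectedness of $\Gamma$ and of each gadget.

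I expect the main obstacle to be the explicit construction of the port gadget. It must be simultaneously cubic, simple, admit a proper $3$-labelling whose colours are prescribed at each port, and be rigid in the sense above. Producing such gadgets that accommodate every prescribed (possibly infinite) number of ports, and verifying rigidity together with colour-compatibility at the boundary, is the technical heart of the argument; the rest of the proof is an assembly procedure around this combinatorial gadget.
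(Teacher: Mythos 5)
Your overall strategy is the same as the paper's: take the labelled Cayley graph $\mathrm{Cay}(G,S)$, whose label-preserving automorphism group is $G$, and replace its vertices and edges by rigid gadgets to obtain a cubic, simple, properly $3$-labelled graph. However, the proposal has genuine gaps precisely where the content of the theorem lies. First, you never construct the gadget; you explicitly defer it as ``the technical heart,'' but that construction \emph{is} the proof. Second, and more seriously, the scheme as described would fail even if a rigid vertex gadget existed: gluing one fixed gadget ``along ports'' does not record which generator $s_i$ an edge of the Cayley graph carries, nor its orientation, and it is exactly this data that cuts the automorphism group down to $G$. For instance, for a free group the unlabelled, undirected Cayley graph is a regular tree with an enormous automorphism group; if the gadgetised graph only remembers the incidence structure, those extra symmetries survive and $\mathrm{Aut}_{LGraph}(\mathcal{G})$ is strictly larger than $G$. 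The paper handles this by using a \emph{different} edge gadget for each generator (a ladder of $N+i$ squares encoding the index $i$ within the $3$-labelling) together with an asymmetric pair of triangles encoding the direction of the edge, and by letting the vertex gadget be a chain of ``in/out'' vertices that also accommodates infinitely many generators --- a point you flag (finitely many ports on a finite gadget) but do not resolve.

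Third, your rigidity hypothesis is too weak for the quotient step. Knowing that the only label-preserving automorphism of a gadget fixing its ports pointwise is the identity does not imply that an arbitrary automorphism of $\mathcal{G}$ maps gadgets to gadgets, nor that it cannot permute the ports of a gadget nontrivially; both must be ruled out before one can say that $\mathrm{Aut}_{LGraph}(\mathcal{G})$ acts on $\mathrm{Cay}(G,S)$ by label- and orientation-preserving automorphisms. The paper proves recognisability of the decomposition intrinsically: $3$-cycles occur only inside edge gadgets, the lengths of the ladders distinguish edge gadgets from vertex gadgets (this is why $N=|S|+2$ and $|S|>3$ are imposed), and removing the $3$-cycles lets one recover the original Cayley graph from $\mathcal{G}$. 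Some argument of this kind --- making the gadget decomposition and the label/orientation encoding recoverable from the abstract labelled graph --- is indispensable and is missing from your sketch.
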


\begin{proof}
Let $S\subset G$ be a set of generators of $G$, we rule out the possibility of $e\in S$. Also, if $S$ is finite we will assume that $|S|>3$. If this is not the case, replace $S=\set{s_i}_{i=1}^{|S|}$ with the list $\set{s_1,s_1,s_1,s_1,s_2,\dots s_{|S|}}$.

Consider $\mathcal{C}=Cay(G,S)$ the associated Cayley graph. This is the directed graph with $G$ as the set of vertices and with set of edges $\sqcup_{i=1}^n\set{(g,gs_i)\in G\times G\mbox{ labeled }s_i}$. It is well known that ${\rm Aut}_{LGraph}(\mathcal{C})\cong G$.

We will now proceed to replace the vertices and edges of $\mathcal{C}$ with fixed configurations of vertices and edges allowing us to create the desired graph without modifying its automorphisms.

For each vertex $g$, there are $|S|$ edges arriving at it and $|S|$ edges coming from it. We replace $g$ with the following configuration of $2|S|$ vertices:

\begin{center}
\begin{tikzpicture}
	\tikzset{node distance = 1cm, auto}
	
	\node[style=point,color=white,label={[label distance=-0.2cm]0:$\Longrightarrow$}](C) {};	
	
	\node[style=point,left of = C,node distance=3cm,label={[label distance=0cm]270:$g$}](0) {};
	\node[style=point,color=white,above left of = 0,node distance=2.5cm](11) {};
	\node[style=point,color=white,below of = 11](12) {};
	\node[style=point,color=white,below left of = 0,node distance=2.5cm](13) {};
	\node[style=point,color=white,above right of = 0,node distance=2.5cm](21) {};
	\node[style=point,color=white,below of = 21](22) {};
	\node[style=point,color=white,below right of = 0,node distance=2.5cm](23) {};	
	
	\draw[->,thick,>=stealth] (11) to node[near start] {$s_1$} (0);
	\draw[->,thick,>=stealth] (12) to node[above=-0.1cm] {$s_2$} (0);
	\draw[->,thick,>=stealth] (13) to node[below=0.2cm] {$s_n$} (0);
	\draw[->,thick,>=stealth] (0) to node[near end] {$s_1$} (21);
	\draw[->,thick,>=stealth] (0) to node[above=-0.1cm] {$s_2$} (22);
	\draw[->,thick,>=stealth] (0) to node[below=0.2cm] {$s_n$} (23);
	
	\draw[->, draw=white] (12) to node[above=0.2cm,right=0.3cm] {$\vdots$} (13);
	\draw[->, draw=white] (22) to node[above=0.2cm,left=0.3cm] {$\vdots$} (23);
	
	\node[style=point,right of = 21,node distance=4cm,label={[label distance=0cm]-45:$g_1^{in}$}](011) {};
	\node[style=point,right of = 22,node distance=3.5cm,label={[label distance=0cm]0:$g_2^{in}$}](012) {};
	\node[style=point,right of = 23,node distance=4cm,label={[label distance=0cm]45:$g_n^{in}$}](013) {};
	\node[style=point,right of = 011,node distance=2cm,label={[label distance=0cm]180+45:$g_1^{out}$}](021) {};
	\node[style=point,right of = 012,node distance=3cm,label={[label distance=0cm]180:$g_2^{out}$}](022) {};
	\node[style=point,right of = 013,node distance=2cm,label={[label distance=0cm]180-45:$g_n^{out}$}](023) {};
	\node[style=point,color=white,above left of = 011](111) {};
	\node[style=point,color=white,left of = 012](112) {};
	\node[style=point,color=white,below left of = 013](113) {};
	\node[style=point,color=white,above right of = 021](121) {};
	\node[style=point,color=white,right of = 022](122) {};
	\node[style=point,color=white,below right of = 023](123) {};
	
	\draw[->,thick,>=stealth] (111) to node[near start] {$s_1$} (011);
	\draw[->,thick,>=stealth] (112) to node[near start] {$s_2$} (012);
	\draw[->,thick,>=stealth] (113) to node[near start] {$s_n$} (013);
	\draw[->,thick,>=stealth] (021) to node[near end] {$s_1$} (121);
	\draw[->,thick,>=stealth] (022) to node[near end] {$s_2$} (122);
	\draw[->,thick,>=stealth] (023) to node[near end] {$s_n$} (123);
	
	\draw[line width=0.6mm,color=red] (011) to[in=180-30,out=30] node {$1$} (021);
	\draw[very thick,color=blue] (011) to[in=80,out=180+40] node[left=0cm] {$2$} (012);
	\draw[very thick,color=blue] (021) to[in=180-80,out=-40] node[right=0cm] {$2$} (022);
	\draw[thick,color=gray] (013) to[in=180+50,out=-50] node {} (023);
	\draw[line width=0.6mm,color=red] (012) to[in=90+20,out=-90] node[above=-0.5cm,fill=white] {$\vdots$} (013);
	\draw[line width=0.6mm,color=red] (022) to[in=90-20,out=-90] node[above=-0.5cm,fill=white] {$\vdots$} (023);
\end{tikzpicture}
\end{center}

The edge connecting $g_n^{in}$ and $g_{n}^{out}$ only appears if $n=|S|<\infty$, and in that case its labeled $1$ or $2$ depending on the parity of $n$ (no consecutive edges can share labels).

Define $N=|S|+2$ if $S$ is finite, and $N=0$ if $|S|=\aleph_0$, and consider an edge labeled $s_i$. We replace this edge with the following construction:

\begin{center}
\begin{tikzpicture}
	\tikzset{node distance = 1cm, auto}
	
	\node[style=point,color=white,label={[label distance=-0.2cm]0:$\Rightarrow$}](C) {};
	
	\node[style=point,left of = C,node distance=2.5cm,label={[label distance=0cm]180:$g_i^{out}$}](01) {};
	\node[style=point,right of = 01,node distance=1cm,label={[label distance=0cm]0:$(gs_i)_i^{in}$}](02) {};
	\node[style=point,color=white,above left of = 01](11) {};
	\node[style=point,color=white,below left of = 01](12) {};
	\node[style=point,color=white,above right of = 02](21) {};
	\node[style=point,color=white,below right of = 02](22) {};
	
	\draw[->,thick,>=stealth] (01) to node {$s_i$} (02);
	\draw[line width=0.6mm,color=red] (11) to[in=90+30,out=-30] node[above=0cm] {$1$} (01);
	\draw[very thick,color=blue] (12) to[in=180+60,out=30] node[below=0cm] {$2$} (01);
	\draw[line width=0.6mm,color=red] (21) to[in=60,out=180+30] node[above=0cm] {$1$} (02);
	\draw[very thick,color=blue] (22) to[in=-60,out=90+60] node[below=0cm] {$2$} (02);
	
	\node[style=point,right of = C,node distance=1.5cm,label={[label distance=0cm]180:$g_i^{out}$}](101) {};
	
	\node[style=point,right of = 101](A1) {};
	\node[style=point,above right of = A1](A21) {};
	\node[style=point,below right of = A1](A22) {};
	\node[style=point,below right of = A21](A3) {};
	\node[style=point,right of = A3](A4) {};
	\node[style=point,above right of = A4](A51) {};
	\node[style=point,below right of = A4](A52) {};
	\node[style=point,right of = A51](A61) {};
	\node[style=point,right of = A52](A62) {};
	\node[style=point,right of = A61,node distance = 2cm](A71) {};
	\node[style=point,right of = A62,node distance = 2cm](A72) {};
	\node[style=point,below right of = A71](A8) {};
	
	\node[style=point,right of = A8,label={[label distance=0cm]0:$(gs_i)_i^{in}$}](102) {};
	
	\node[style=point,color=white,above left of = 101](111) {};
	\node[style=point,color=white,below left of = 101](112) {};
	\node[style=point,color=white,above right of = 102](121) {};
	\node[style=point,color=white,below right of = 102](122) {};
	
	\draw[line width=0.6mm,color=red] (111) to[in=90+30,out=-30] node[above=0cm] {} (101);
	\draw[very thick,color=blue] (112) to[in=180+60,out=30] node[below=0cm] {} (101);
	\draw[line width=0.6mm,color=red] (121) to[in=60,out=180+30] node[above=0cm] {} (102);
	\draw[very thick,color=blue] (122) to[in=-60,out=90+60] node[below=0cm] {} (102);
	
	\draw[line width=0.75mm,color=green] (101) to node {$3$} (A1);
	\draw[line width=0.75mm,color=green] (A21) to node {} (A22);
	\draw[line width=0.75mm,color=green] (A3) to node {$3$} (A4);
	\draw[line width=0.75mm,color=green] (A51) to node {} (A52);
	\draw[line width=0.75mm,color=green] (A61) to node {} (A62);
	\draw[line width=0.75mm,color=green] (A71) to node {} (A72);
	\draw[line width=0.75mm,color=green] (A8) to node {$3$} (102);
	
	\node[style=point,color=white,right of = A4,label={[label distance=0.8cm]0:$N+i$}](n) {};
	
	\draw[line width=0.6mm,color=red] (A1) to node {} (A21);
	\draw[line width=0.6mm,color=red] (A22) to node {} (A3);
	\draw[line width=0.6mm,color=red] (A4) to node {} (A51);
	\draw[line width=0.6mm,color=red] (A52) to node {} (A62);
	\draw[line width=0.6mm,color=red] (A61) to node[fill=white,above=-0.25cm] {$\cdots$} (A71);
	\draw[line width=0.6mm,color=red] (A72) to node {} (A8);
	
	\draw[very thick,color=blue] (A1) to node {} (A22);
	\draw[very thick,color=blue] (A21) to node {} (A3);
	\draw[very thick,color=blue] (A4) to node {} (A52);
	\draw[very thick,color=blue] (A51) to node {} (A61);
	\draw[very thick,color=blue] (A62) to node[fill=white,above=-0.25cm] {$\cdots$} (A72);
	\draw[very thick,color=blue] (A71) to node {} (A8);
\end{tikzpicture}
\end{center}

The resulting graph $\mathcal{G}$ is a cubic connected graph with properly $3$-labeled edges. Let us see that, indeed ${\rm Aut}_{LGraph}(\mathcal{G})\cong G$. By construction, $3$-cycles only appear inside the configuration used to replace the edges of $\mathcal{C}$. We will always find four of these for each edge. Two of them share $2$ vertices and codifies the orientation of the original the edge in $\mathcal{C}$. The other two are joined by a ``ladder" of $(N+i)$ steps, consisting in $4$-cycles, which allows us to recover the label $s_i$. In order to recover the vertices of $\mathcal{C}$, remove all $3$-cycles from $\mathcal{G}$ and look at the connected components that remain. If $S$ is infinite, finite order connected components come from replacing edges, so each other connected component gives a vertex. If $S$ is finite, notice that connected components coming from edges must have order strictly greater than $2|S|$, so connected components of order $2|S|$ all come from a vertex. Finally note that the configuration used to replace edges is rigid, meaning it has no non-trivial automorphisms. This, together with the fact that $\mathcal{C}$ is completely determined by $\mathcal{G}$, gives ${\rm Aut}_{LGraph}(\mathcal{G})\cong {\rm Aut}_{LGraph}(\mathcal{C})\cong G$.
\end{proof}

\begin{lema}\label{action}
Let $\mathcal{G}$ be a cubic connected graph with properly $3$-labeled edges. Then, every $G\leq {\rm Aut}_{LGraph}(\mathcal{G})$ acts freely on the vertices of $\mathcal{G}$.
\end{lema}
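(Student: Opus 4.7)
The plan is to show that any label-preserving automorphism fixing a single vertex must be the identity, by a standard propagation argument using connectedness.

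First I would fix $\phi \in G \subseteq \operatorname{Aut}_{LGraph}(\mathcal{G})$ and a vertex $v$ with $\phi(v)=v$, and show $\phi$ fixes all three neighbors of $v$. Since $\mathcal{G}$ is cubic, exactly three edges $e_1,e_2,e_3$ meet at $v$. By the proper $3$-labeling, the labels of $e_1,e_2,e_3$ are three distinct elements of $\{1,2,3\}$ (any two edges sharing a vertex are adjacent, hence carry different labels, and there are only three labels available). Because $\phi$ preserves vertex-incidence and labels, $\phi$ permutes $\{e_1,e_2,e_3\}$ in a label-preserving way; since the three labels are all distinct, this permutation is trivial, so $\phi(e_i)=e_i$ for each $i$. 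As $\mathcal{G}$ is simple, each $e_i$ has a unique endpoint distinct from $v$, and $\phi$ must fix that endpoint.

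Next I would propagate this to the whole graph. Let $F = \{w \in V(\mathcal{G}) : \phi(w)=w\}$. The above argument applied at every $w \in F$ shows that all neighbors of $w$ lie in $F$, so $F$ is open in the graph-distance topology (i.e., a union of connected components). Since $v \in F$ and $\mathcal{G}$ is connected, $F = V(\mathcal{G})$, so $\phi$ fixes every vertex. Because $\mathcal{G}$ is a simple graph, an automorphism fixing every vertex also fixes every edge (each edge is determined by its unordered pair of endpoints), hence $\phi = \mathrm{id}$.

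Concluding: the stabilizer in $G$ of any vertex is trivial, i.e., $G$ acts freely on the vertex set of $\mathcal{G}$. The argument is essentially a one-liner once the proper $3$-labeling is used; the only point requiring any care is that cubic plus proper $3$-labeled forces the three labels at each vertex to be pairwise distinct (not merely pairwise different on adjacent pairs), and this is immediate because all three edges at a vertex are pairwise adjacent. I do not foresee a serious obstacle; the hypotheses are tight enough that the proof reduces to bookkeeping.
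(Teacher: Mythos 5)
Your proof is correct and follows essentially the same argument as the paper: at each vertex the three incident edges carry distinct labels, so an automorphism fixing a vertex fixes its neighbors, and connectedness (which the paper phrases as induction along a path) propagates this to the whole graph.
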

\begin{proof}
Take $G\leq {\rm Aut}_{LGraph}{\mathcal{G}}$ and $g\in G$. Suppose $g(v)=v$ for some vertex $v$ of $\mathcal{G}$. For every other vertex $w$, consider a path from $v$ to $w$. Since $g$ is an automorphism of labeled graphs it must preserve the labelling of the path $v\rightarrow w$. Suppose this path is $v=v_0, v_1,\dots v_n=w$, and suppose the edges of $\mathcal{G}$ are labeled  with $\set{1,2,3}$. We know that the edge between $g(v_0)$ and $g(v_1)$ has the same label as the edge between $v_0$ and $v_1$. However, $g(v_0)=v_0$ and at any vertex of $\mathcal{G}$, there is only one adjacent edge for each label. This means the edge is the same and thus, $g(v_1)=v_1$. Clearly, by finite induction we arrive at $g(w)=w$. And since $w$ was arbitrary, $g=id$. Hence, $G$ acts freely on $\mathcal{G}$.
\end{proof}

\subsection{Certain hyperbolic right-angle hexagons}

Our proof of Theorem \ref{main} is built upon the existence of hyperbolic pants, that is hyperbolic hexagons, whose side lenghts are different enough. The next result shows these hexagons do exist.

\begin{lema}\label{hexagon}
There exists a hyperbolic right-angled hexagon with edges of length $a_1,b_1,a_2,b_2,a_3,b_3$, such that all lengths are different and $a_i<1<b_i$ for $i=1,2,3$.
\end{lema}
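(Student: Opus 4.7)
My plan rests on the classical fact (see e.g.\ Buser's \emph{Geometry and Spectra of Compact Riemann Surfaces}, Ch.~2, or Beardon) that given any three positive real numbers $a_1, a_2, a_3$, there exists a unique (up to isometry) right-angled hyperbolic hexagon with alternating sides of lengths $a_1, a_2, a_3$, and the remaining three sides $b_1, b_2, b_3$ are then determined by the hexagon identities
\[
\cosh b_i \;=\; \frac{\cosh a_i + \cosh a_j \cosh a_k}{\sinh a_j \sinh a_k},
\qquad \{i,j,k\}=\{1,2,3\}.
\]
Thus the problem reduces to choosing $a_1, a_2, a_3$ so that the induced $b_i$'s land in the desired range and, together with the $a_i$'s, form six pairwise distinct values.

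The first step is to pick $a_1, a_2, a_3 \in (0,1)$ pairwise distinct, say $0 < a_1 < a_2 < a_3 < 1$. This immediately ensures the three $a_i$'s satisfy the upper bound $a_i < 1$ and are mutually distinct.

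Next I would show that by taking the $a_i$'s small enough, each $b_i$ exceeds $1$. As $a_j, a_k \to 0^{+}$, the numerator in the formula above tends to $1 + \cosh a_i \geq 2$, while the denominator tends to $0$; hence $\cosh b_i \to \infty$, so $b_i \to \infty$. In particular there is a threshold $\varepsilon > 0$ below which $\max\{a_1,a_2,a_3\} < \varepsilon$ forces $b_i > 1$ for $i=1,2,3$. Since $b_i > 1 > a_j$ for all $i,j$, the sets $\{a_i\}$ and $\{b_i\}$ are automatically disjoint.

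The one point requiring a little care is the mutual distinctness of $b_1, b_2, b_3$. Using $\cosh t = 1 + O(t^2)$ and $\sinh t = t + O(t^3)$ in the identity, a short Taylor expansion yields
\[
\cosh b_i \;=\; \frac{2 + O(a^2)}{a_j a_k}\quad\text{as } a_1,a_2,a_3\to 0,
\]
so the leading asymptotics of $b_i$ is governed by the product $a_j a_k$. Since $a_1 < a_2 < a_3$ implies $a_1 a_2 < a_1 a_3 < a_2 a_3$ are all distinct, choosing the $a_i$'s sufficiently small forces $b_1, b_2, b_3$ themselves to be distinct. The mildly fiddly step will be making this asymptotic comparison rigorous enough to separate the three $b_i$'s, but this is a direct estimate from the closed-form identity rather than a genuine obstacle. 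Combining everything, any triple $0 < a_1 < a_2 < a_3$ chosen small enough yields a right-angled hexagon satisfying all three requirements.
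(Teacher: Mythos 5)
Your proposal is correct and uses the same two ingredients as the paper (existence of a right-angled hexagon with three prescribed alternating side lengths, plus the hexagon cosine identity $\cosh b_i=\frac{\cosh a_i+\cosh a_j\cosh a_k}{\sinh a_j\sinh a_k}$), but it runs the construction in the opposite direction: you prescribe the three \emph{short} sides $a_1<a_2<a_3$ and derive the long sides, whereas the paper prescribes the three \emph{long} sides $l-\epsilon,l,l+\epsilon$ (cited from Ratcliffe) and lets the short opposite sides $\alpha,\beta,\gamma\to 0$ as $l\to\infty$, $\epsilon\to 0$. The real divergence is the distinctness step: the paper gets it for free from the hexagon sine rule $\frac{\sinh l}{\sinh\alpha}=\frac{\sinh(l+\epsilon)}{\sinh\beta}=\frac{\sinh(l-\epsilon)}{\sinh\gamma}$, which turns distinctness of one triple into distinctness of the opposite triple exactly, while you argue asymptotically from $\cosh b_i\sim\frac{2}{a_ja_k}$. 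Your asymptotic argument does work, but as phrased ("any triple chosen small enough") it has a mild quantifier issue: the threshold for separating the $b_i$ depends on the ratios $a_2/a_1$, $a_3/a_2$, so you should fix a scaling family such as $a_i=i\,t$ with $t\to 0$ (which is all that existence requires). Better still, the sine rule applies verbatim in your setup — since $b_i$ is opposite $a_i$, one has $\frac{\sinh a_1}{\sinh b_1}=\frac{\sinh a_2}{\sinh b_2}=\frac{\sinh a_3}{\sinh b_3}$, so distinct $a_i$ force distinct $b_i$ with no expansion at all — which would eliminate the step you yourself flag as fiddly and make your argument as short as the paper's.
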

\begin{proof}

For any $l,\epsilon>0$, there exists a hyperbolic right-angled hexagon such that three non-consecutiveedges have lengths $l,l+\epsilon$ and $l-\epsilon$ \cite{ratcliffe}. Let $\alpha$, $\beta$, and $\gamma$ be the lengths of the edges opposing the edges of length $l$, $l+\epsilon$, and $l-\epsilon$ respectively. It is known that \cite{thurston},
$$\alpha=\cosh^{-1}\left(\frac{\cosh(l+\epsilon)\cosh(l-\epsilon)+\cosh(l)}{\sinh(l+\epsilon)\sinh(l-\epsilon)}\right).$$
By taking limits when $l\rightarrow\infty$ and $\epsilon\rightarrow0$, $\alpha$ goes to $0$, and this is also true for $\beta$ and $\gamma$. Therefore, by taking $l$ big enough and $\epsilon$ small enough we can have $0<\alpha,\beta,\gamma<1<l-\epsilon<l<l+\epsilon$. Finally, it can be also shown that \cite{thurston},
$$\frac{\sinh l}{\sinh \alpha}=\frac{\sinh (l+\epsilon)}{\sinh\beta}=\frac{\sinh(l-\epsilon)}{\sinh\gamma}.$$
And since $\sinh$ is a strictly increasing function, we deduce that $\alpha,\beta,\gamma$ are all different.
\end{proof}

\section{Proof of Theorem \ref{realizacion}}
\begin{proof}
The result is already known for the trivial group \cite{H,GJ} so we may assume $G$ is not trivial.

Choose $a_1,a_2,a_3,b_1,b_2,b_3$ as in Lemma \ref{hexagon}. Denote as $P$ the pair of hyperbolic pants obtained from glueing two copies of the hexagon given by the lemma along the edges of length $b_1,b_2,b_3$. The surface $P$ has three closed geodesics of length $2a_1,2a_2,2a_3$ as boundary. We will label them with $1,2,3$ respectively.
\begin{center}
\begin{tikzpicture}

\draw (-0.5,-3) .. controls (-0.5,-2) and (0.5,-2) .. (0.5,-3);
\draw (-1,0) .. controls (-1,-0.5) .. (-2,-1.5);
\draw (-2,-1.5) .. controls (-2.5,-2) .. (-2.5,-3);
\draw (1,0) .. controls (1,-0.5) .. (2,-1.5);
\draw (2,-1.5) .. controls (2.5,-2) .. (2.5,-3);

\draw[color=red,line width=0.6mm] (0,0) ellipse (1cm and 0.5cm);
\draw[color=blue,very thick] (-1.5,-3) ellipse (1cm and 0.5cm);
\draw[color=green,line width=0.75mm] (1.5,-3) ellipse (1cm and 0.5cm);

\end{tikzpicture}
\end{center}

Now, we build $X$ as follows. Consider $\mathcal{G}$ the graph associated to $G$ given by Theorem \ref{3graph}. For each vertex $v$ of $\mathcal{G}$, consider an isometric copy $P_v$ of $P$. And, for every edge labeled $i\in\set{1,2,3}$ between vertices $v$ and $w$, glue the pants $P_v$ and $P_w$ by isometrically identifying the geodesics with said label (thus without creating any twists).

Since the set of vertices of $\mathcal{G}$ is countable, we get an hyperbolic orientable surface $X$. If $G$ is finite, the set of vertices of $\mathcal{G}$ is also finite, so $X$ would be closed. Also note that $G$ permutes vertices in $\mathcal{G}$, that is it permutes pants, thus ${\rm Isom}^{+}(X)\cong G$. Indeed, an isometry maps closed geodesics into closed geodesics. The closed geodesics in $X$ are those from the boundary of each pair of pants, of lengths $2a_1,2a_2,2a_3<2$, and those that appear after joining at least three pants together, these ones have at least three geodesic segments of length $b_i>1$, so their length is at least $3$. Thus, the isometry must send the boundary of each pair of pants into another boundary. Therefore, isometries map pants into pants, allowing to translate them as automorphisms of $\mathcal{G}$. Also, the geodesics from the boundaries map to a geodesic of the same length, thus, it preserves the labelling of edges in $\mathcal{G}$. This proves $i)$.

Let us now build the dessin $\mathcal{D}_G$. For each vertex $v$ of $\mathcal{G}$, consider the following graph on $P_v$:
\begin{center}
\begin{tikzpicture}

\draw[very thick] (-0.5,-3) .. controls (-0.5,-2) and (0.5,-2) .. (0.5,-3);
\draw[very thick] (-1,0) .. controls (-1,-0.5) .. (-2,-1.5);
\draw[very thick] (-2,-1.5) .. controls (-2.5,-2) .. (-2.5,-3);
\draw[very thick] (1,0) .. controls (1,-0.5) .. (2,-1.5);
\draw[very thick] (2,-1.5) .. controls (2.5,-2) .. (2.5,-3);

\draw[very thick] (0,-0.5) -- (0,-1.5);

\draw[very thick] (0,0) ellipse (1cm and 0.5cm);
\draw[very thick] (-1.5,-3) ellipse (1cm and 0.5cm);
\draw[very thick] (1.5,-3) ellipse (1cm and 0.5cm);

\draw (0,-1.5) node[color=white,label={[label distance=0cm]0:$v$}](0) {};
\draw[fill=black] (0,-1.5) circle (3pt);
\draw[fill=white] (0,-0.5) circle (3pt);
\draw[fill=black] (-1,0) circle (3pt);
\draw[fill=black] (1,0) circle (3pt);
\draw[fill=white] (0,0.5) circle (3pt);

\draw[fill=black] (-2.5,-3) circle (3pt);
\draw[fill=black] (-0.5,-3) circle (3pt);
\draw[fill=black] (-1.5,-2.5) circle (3pt);
\draw[fill=white] (-1,-3.44) circle (3pt);
\draw[fill=white] (-2,-3.44) circle (3pt);
\draw[fill=black] (-1.5,-3.5) circle (3pt);
\draw[fill=white] (-1,-2.56) circle (3pt);
\draw[fill=white] (-2,-2.56) circle (3pt);

\draw[fill=black] (2.5,-3) circle (3pt);
\draw[fill=white] (2.3,-3.3) circle (3pt);
\draw[fill=white] (2.3,-2.7) circle (3pt);
\draw[fill=black] (2,-3.44) circle (3pt);
\draw[fill=black] (2,-2.56) circle (3pt);
\draw[fill=white] (1.5,-3.5) circle (3pt);
\draw[fill=white] (1.5,-2.5) circle (3pt);
\draw[fill=black] (1,-3.44) circle (3pt);
\draw[fill=black] (1,-2.56) circle (3pt);
\draw[fill=white] (0.7,-3.3) circle (3pt);
\draw[fill=white] (0.7,-2.7) circle (3pt);
\draw[fill=black] (0.5,-3) circle (3pt);

\draw (-2,-1.5) node[color=white,fill=white,label={[label distance=0cm]0:$ $}](1) {};
\draw (2,-1.5) node[color=white,fill=white,label={[label distance=0cm]0:$ $}](2) {};
\draw (0,-2.3) node[color=white,fill=white,label={[label distance=0cm]0:$ $}](3) {};

\draw (-2.55,-1.5) node[color=white,label={[label distance=0cm]0:$\cdots$}](1) {};
\draw (1.45,-1.5) node[color=white,label={[label distance=0cm]0:$\cdots$}](2) {};
\draw (-0.55,-2.3) node[color=white,label={[label distance=0cm]0:$\cdots$}](3) {};

\end{tikzpicture}
\end{center}

On the $3$ edges connecting two black vertices we place an alternating sequence of white-black vertices so that the resulting graph is bipartite. Each of these sequences has different length and all of them have length greater than the cycles on the boundary of $P$. Note that the surface obtained by removing this graph from $P_v$ is homeomorphic to the disjoint union of two open disks. By repeating the same pattern on all the pants, we obtain a graph $\mathcal{D}_G$ which makes $(X,\mathcal{D}_G)$ a dessin.

Finally, given a subgroup $H\leq G$ we define $\mathcal{D}_H$ as follows: Fix some $v_0\in\mathcal{G}$ and remove from $\mathcal{D}_G$ the vertices from the $G$-orbit of $v_0$ that are not in its $H$-orbit. This gives $\mathcal{D}_H\leq\mathcal{D}_G$ which makes $(X,\mathcal{D}_H)$ a dessin. And it is clear that this definition satisfies $iii)$.

Take an automorphism $f\in {\rm Aut}(X,\mathcal{D}_H)$. On each pair of pants $P_v$, $f$ preserves cycles on the boundary and thus, also the geodesics, so it must be induced by an element of ${\rm Isom}^{+}(X)$. This shows ${\rm Aut}(X,\mathcal{D}_H)=\set{h\in {\rm Isom}^{+}(X)\ |\ h(\mathcal{D}_H)=\mathcal{D}_H}$. Thus, $f\equiv h|_{\mathcal{D}_H}$ with $h\in {\rm Isom}^{+}(X)$ and $h(\mathcal{D}_H)=\mathcal{D}_H$. We know that $h$ maps pants into pants, but it has to also map $\mathcal{D}_H$ into itself. Hence, by construction, the pant $P_{v_0}$ will only map to another pant $P_w$ with $w\in Hv_0$. Lemma \ref{action} shows that this completely determines an element of the action of $H$ on $\mathcal{D}_H$, and thus, ${\rm Aut}(X,\mathcal{D}_H)\cong H$, proving $ii)$.

\end{proof}

\section{Proof of Theorem \ref{main}}
\begin{proof}
Let us consider a topologically tame action $\theta:G \to {\rm Hom}^{+}(X)$ and let $\pi:X \to Y$ be a Galois cover with deck group $\theta(G)$.
We may consider a dessin ${\mathcal D}_{0}=(Y,{\mathcal G})$ such that: (i) $B_{\pi}$ is contained in the set of white vertices of
${\mathcal G}$ and (ii) there is exactly one black vertice of degree one.
This dessin induces a locally finite branched cover $Q:Y \to S^{2}$ such that
\begin{enumerate}
\item the locus of branched values of $Q$ is $B_{Q} \subseteq \{p_{0},p_{1},p_{\infty}\} \subset S^{2}$;
\item $Q:Y \smallsetminus Q^{-1}(B_{Q}) \to S^{2} \smallsetminus B_{Q}$ is a covering map;
\item $Q^{-1}(p_{0})$ (respectively, $Q^{-1}(p_{1})$) are the black (respectively, white) vertices of ${\mathcal G}$, while there is a bijective correspondence between the points in $Q^{-1}(p_{\infty})$ and the connected components of $Y \smallsetminus {\mathcal G}$;
\item$Q^{-1}(p_{0}) \cap B_{\pi}=\emptyset$; and
\item in $Q^{-1}(p_{0})$ there is exactly one non-critical point $x_{0}$.
\end{enumerate}

The dessin ${\mathcal D}=(X,\pi^{-1}({\mathcal G}))$ satisfies that $\theta(G) \leq A={\rm Aut}({\mathcal D})$.
We claim that $A=\theta(G)$. Let us assume, by the contrary, that $\theta(G) \neq A$. In this case:
\begin{enumerate}
\item[(i)] there is a Galois (possible branched) cover $\pi_{A}:X \to R$, with deck group $A$,
\item[(ii)] there is a locally finite branched cover  $P:Y \to R$, degree at least two (it could be of infinite degree) such that $P \circ \pi=\pi_{A}$, and
\item[(iii)] there is a locally finite branched cover  $L:R \to S^{2}$ such that $L \circ P=Q$.

\end{enumerate}

Set $r_{0}=P(x_{0})$ and let $d \geq 1$ be the branched order of $\pi_{A}$ at $r_{0}$ (i.e., each point in the fiber $\pi_{A}^{-1}(r_{0})$ has $A$-stabilizer a cyclic group of order $d$.

As $B_{\pi} \cap Q^{-1}(p_{0})=\emptyset$, it follows that, at each point in the fiber of $P^{-1}(r_{0})$, the local degree of $P$ is the same $d$. But this means that, at each of these points, the local degree of $Q=L \circ P$ must be the same. This is a contradiction to condition (5) of $Q$.
\end{proof}

\begin{rema}[Another proof of Theorem \ref{realizacion}]\label{observa}
Let $H$ be a subgroup of $G$ and consider a locally finite branched Galois cover $U:X \to R$ induced by the action of $H$.
Take a locally finite branched cover $T:R \to Y$ such that $\pi=T \circ U$. Consider the preimage, under $T$, of the bipartite graph ${\mathcal G}$ (in the above proof) to obtain a bipartite graph $\widetilde{\mathcal G} \subset R$. In that case, all branch values of $U$ are contained in the white vertices and there is a non-empty collection of degree one black vertices. By removing  all the degree one black vertices (with the exception of one of them) and the adjacent edges, then we obtain a new bipartite graph ${\mathcal H} \subset \widetilde{\mathcal G}$. Such a bipartite graph satisfies the same conditions for $R$ and $U$ as ${\mathcal G}$ did for $Y$ and $\pi$. So, the same proof, permits to obtain a new dessin on $X$ (whose bipartite graph is contained in the bipartite graph $\pi^{-1}({\mathcal G})$) and whose automorphism group is $H$.
\end{rema}


\end{document}